\newtheorem{theorem}{Theorem}
\newtheorem{definition}[theorem]{Definition}
\newtheorem{example}[theorem]{Example}
\newtheorem{lemma}[theorem]{Lemma}
\newtheorem{problem}[theorem]{Problem}
\newtheorem{remark}[theorem]{Remark}
\newenvironment{proof}[1][Proof]{\textbf{#1.} }{\ \rule{0.5em}{0.5em}}
\begin{document}

\title{A note on meromorphic functions on a compact Riemann surface having poles at a single point}
\author{Gollakota V V Hemasundar}
\date{}
\maketitle

\begin{abstract}
The Riemann -Rock theorem plays a central role in the theory of Riemann surfaces with applications to several branches in Mathematics and Physics. Suppose $X$ ia a compact Riemann surface of genus $g$ and $P \in X$.  By the Riemann-Roch theorem  there exists a  meromorphic function on $X$  having  a pole at $P$ and is holomorphic in  $X \setminus \{P\}$.  The Weierstrass gap  theorem gives more information on the order of the pole at $P$. It  determines a sequence of $g$ distinct numbers $1 < n_k < 2g$, $1 \leq k \leq g$  for which a meromorphic function with the order  $n_k$, fails to exist at $P$ and it can be obtained again as an application of Riemann-Roch theorem. 
In this  note, we give proof of the Weierstrass gap theorem,  using the dimensions of the  cohomology groups  and find an interesting combinatorial problem, which may be seen as a byproduct from the statement of the Weierstrass gap theorem. A short note is given at the end on Weierstrass points where a meromorphic function with lower order  pole $\leq g$ exists and obtain some consequences of Weierstrass gap theorem.
\end{abstract}
AMS:2020, 30F10\\
Keywords: Compact Riemann surfaces, Weierstrass points, Weierstass gap sequence, Meromorphic functions

\maketitle

\section{Introduction}
Let $X$ be a compact Riemann surface of genus $g$ and $P \in X$. By the Riemann-Rock theorem there exists a  non-constant meromorphic function $f \in \mathcal{M}(X)$ which has a pole of order $\leq g +1$ at $P$ and is holomorphic in $X \setminus \{P\}$. 

The bound $g+1$ can be further improved to $g$ if the point $P$ is a Weierstrass point \cite{forster}. 

In this note we prove the following theorem which provides more information on the order of a pole at a point $P$ on $X$: 
\begin{theorem} [Weierstrass gap theorem]\label{wgp}  Let X be a compact Riemann surface, of  genus $g \geq 1$.  Suppose $P$ is a point on $X$. Then there are precisely $g$ integers, $n_k$ such that
\begin{equation}\label{wgs}
1=n_1  <  n_2 < \dots     <n_g <2g
\end{equation}  
such that there does not exist a meromorphic function on $X$ with a pole of order $n_k$ at $P$.
\end{theorem}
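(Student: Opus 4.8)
The plan is to study the nested family of function spaces attached to the divisors $nP$ and to track how their dimensions grow with $n$. For each integer $n \geq 0$ let $L(nP) = H^0(X, \mathcal{O}(nP))$ denote the space of meromorphic functions on $X$ that are holomorphic on $X \setminus \{P\}$ and have a pole of order at most $n$ at $P$ (together with the zero function), and write $\ell(n) = \dim_{\mathbb{C}} L(nP)$. First I would record the two structural facts that drive everything: that $L((n-1)P) \subseteq L(nP)$, and that the quotient has dimension at most one. For the latter I would pick a local coordinate $z$ centred at $P$ and send $f \in L(nP)$ to the coefficient of $z^{-n}$ in its Laurent expansion; this is a linear functional into $\mathbb{C}$ whose kernel is exactly $L((n-1)P)$, so $\ell(n) - \ell(n-1) \in \{0,1\}$. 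I would then call $n$ a \emph{gap} precisely when $\ell(n) = \ell(n-1)$, that is, when no meromorphic function has a pole of order exactly $n$ at $P$; the theorem becomes the assertion that there are exactly $g$ gaps and that they all lie in the range $[1, 2g-1]$.

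Next I would count the gaps globally using Riemann-Roch together with Serre duality, which is the cohomological input the note advertises. Writing Riemann-Roch as $\ell(n) - h^1(nP) = n - g + 1$, where $h^1(nP) = \dim H^1(X, \mathcal{O}(nP))$, I would evaluate it at the two ends of the range. At $n = 0$ the space $L(0)$ consists of the constants, so $\ell(0) = 1$ and $h^1(0) = g$. At the upper end, Serre duality identifies $h^1(nP)$ with $\dim H^0(X, \Omega^1(-nP))$, the space of holomorphic $1$-forms vanishing to order at least $n$ at $P$; since a nonzero holomorphic $1$-form has divisor of degree $2g-2$, this space vanishes as soon as $n > 2g - 2$. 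Hence $h^1(nP) = 0$ for all $n \geq 2g - 1$, giving $\ell(n) = n - g + 1$ there. Telescoping the jumps then yields, for any $N \geq 2g-1$, the identity $\sum_{n=1}^{N} (\ell(n) - \ell(n-1)) = \ell(N) - \ell(0) = (N - g + 1) - 1 = N - g$, which is precisely the number of non-gaps in $\{1, \dots, N\}$. The number of gaps in that range is therefore $N - (N-g) = g$, independent of $N$.

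To sharpen this count into the stated conclusion I would localize the gaps. Because $h^1(nP) = 0$ for every $n \geq 2g - 1$, the jump formula forces $\ell(n) - \ell(n-1) = 1$ for every $n \geq 2g$, so no integer $\geq 2g$ is a gap and all $g$ gaps already occur in $\{1, 2, \dots, 2g - 1\}$, which gives the bound $n_g < 2g$. Finally I would pin down the smallest gap: were $n = 1$ a non-gap, there would exist a meromorphic function with a single simple pole at $P$ and no other poles, hence a degree-one holomorphic map $X \to \mathbb{P}^1$, which is necessarily an isomorphism and forces $g = 0$. Since $g \geq 1$, the value $n = 1$ is always a gap, so $n_1 = 1$. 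Listing the $g$ gaps in increasing order then produces the chain $1 = n_1 < n_2 < \dots < n_g < 2g$ of \eqref{wgs}.

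I expect the main obstacle to be the cohomological vanishing at the upper end rather than the bookkeeping: the whole argument hinges on knowing that $h^1(nP)$ drops to zero for $n \geq 2g-1$, and making this rigorous requires Serre duality together with the computation $\deg(\operatorname{div}\omega) = 2g - 2$ for a nonzero holomorphic $1$-form $\omega$. The monotone jump-by-at-most-one step and the concluding degree-one-map argument are comparatively routine, so the proof really rests on feeding the correct degrees into Riemann-Roch at the two boundary values $n = 0$ and $n = 2g - 1$.
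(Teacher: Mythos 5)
Your proof is correct and takes essentially the same route as the paper: both feed the divisors $D_n = nP$ into Riemann--Roch, use Serre duality together with $\deg K = 2g-2$ to force $\dim H^0\left(X, \Omega_{-D_n}\right) = 0$ for $n \geq 2g-1$, and conclude that this dimension must fall from $g$ at $n=0$ to $0$ at $n=2g-1$ in exactly $g$ unit drops, each drop marking a gap. If anything, your write-up is more complete than the paper's: you explicitly prove the jump-by-at-most-one lemma via the Laurent-coefficient functional, and you explicitly verify that $n_1 = 1$ (a single simple pole would give a degree-one map to $\mathbb{P}^1$, forcing $g=0$), two points the paper's proof leaves implicit.
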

The numbers $n_k$, for $k = 1, \dots, g$ are called gaps at $P$ and their complement in $\mathbb{N}$ are called non-gaps.  There are precisely $g$ non-gaps in $\{2, \dots , 2g\}$.  Further, the gap sequence is uniquely determined by the point $P$. 

The cases $g = 0$ and $g=1$ can also be seen as part of the gap sequence. If $g=0$, there is always a meromorphic function function on the sphere with a pole at $P$.
In case of $g=1$, there is no meromorphic function on $X$ with a simple pole at $P$ and is holomorphic in $X \setminus \{P\}$, since there is no doubly periodic meromorphic function having precisely a simple pole at a single point in any period parallelogram.
 
In this note, Theorem \ref{wgp}  will be proved with the spirit of the proof given in \cite{springer}, using the terminology of sheaf cohomology.
A proof of the Weierstrass-gap theorem can be derived as a special case of Noether theorem. See Farkas and Kra \cite{farkas}. The proofs are generally given as an application of the Riemann-Roch Theorem. 

In section 2, we mention some of the consequences of Riemann-Roch and Serre Duality theorems \cite{forster} that are useful for comparing the dimensions of cohomology groups. The Proof of the Weierstrass-gap theorem is given in Section 3. In Section 4, we form a combinatorial problem that appears to be a byproduct of the statement of the Weierstrass gap theorem and mention some remarks on Weierstrass points. We conclude with Section 5.

\section{Some Consequences of Riemann-Roch and Serre Duality Theorems}

\begin{definition}
Let $D$ be a divisor on $X$. For any open set $U \subset X$, we define $\mathcal{O}_D(U)$ the set of all those meromorphic functions which are multiples of the divisor $-D$, i.e.,
$$
\mathcal{O}_D(U) := \{f \in \mathcal{M}(U) : ord_x(f) \geq - D(x) \mbox{ for every } x \in U\}
$$
$$
= \{f \in \mathcal{M}(U) : ord_x(f)+D(x)  \geq 0  \mbox{ for every } x \in U\}
$$
\end{definition}
One can verify with the natural restriction mappings, $ \mathcal{O}_D$ is a sheaf.   It denotes the sheaf of meromorphic functions which are multiples of the divisor $-D$. See \cite{forster}. 
 If $D=0$,  one has $\mathcal{O}_0 =\mathcal{O}$ which is the set of holomorphic functions on $X$.

To $D$ we can associate a line bundle $L_D$  such that the sheaf of holomorphic cross sections of $L_D$ is isomorphic to the sheaf $\mathcal{O}_D$ of  meromorphic multiples of $-D$.
\begin{theorem} \label{wgt1}
Let $X$ be a compact Riemann surface and $D\in Div(X)$. If $deg D < 0$ then $H^0\left(X, \mathcal{O}_D\right) = 0$.
\end{theorem}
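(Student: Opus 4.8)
The plan is to prove that a meromorphic function that is a global holomorphic section of $\mathcal{O}_D$ — equivalently, an element of $H^0(X,\mathcal{O}_D)$ — must vanish identically when $\deg D < 0$. The core idea is a degree-counting argument: any nonzero meromorphic function on a compact Riemann surface has as many zeros as poles (counted with multiplicity), so its associated principal divisor has degree zero. I would set this up and then derive a contradiction with the hypothesis $\deg D < 0$.

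First I would recall the definition: an element $f \in H^0(X,\mathcal{O}_D) = \mathcal{O}_D(X)$ is a meromorphic function on all of $X$ satisfying $\operatorname{ord}_x(f) + D(x) \geq 0$ for every $x \in X$, i.e. $\operatorname{div}(f) + D \geq 0$ as a divisor. Suppose, for contradiction, that $f$ is not identically zero. Then $f$ is a nonconstant or constant nonzero meromorphic function on the compact surface $X$. The key step I would invoke is the standard fact that for any nonzero $f \in \mathcal{M}(X)$ on a compact Riemann surface, the degree of its divisor is zero:
\begin{equation}\label{degzero}
\deg(\operatorname{div}(f)) = \sum_{x \in X} \operatorname{ord}_x(f) = 0.
\end{equation}
This holds because the number of zeros equals the number of poles, which follows by integrating $\frac{1}{2\pi i}\, d\log f$ over $X$ (the argument principle applied globally) or equivalently because $f$ defines a proper holomorphic map $X \to \mathbb{P}^1$ of some degree, and every fiber has the same cardinality counted with multiplicity.

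From here the conclusion is immediate. Taking degrees in the divisor inequality $\operatorname{div}(f) + D \geq 0$ and using that the degree of an effective divisor is nonnegative, I obtain
\begin{equation}\label{contra}
0 \leq \deg(\operatorname{div}(f) + D) = \deg(\operatorname{div}(f)) + \deg D = 0 + \deg D = \deg D < 0,
\end{equation}
which is a contradiction. Hence no nonzero $f$ can exist, and $H^0(X,\mathcal{O}_D) = 0$.

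The main obstacle — really the only nontrivial ingredient — is establishing the degree-zero fact \eqref{degzero}, namely that a nonzero global meromorphic function has equally many zeros and poles. On a compact surface this is genuinely where compactness is used; the rest is bookkeeping with divisor inequalities. I expect this fact to be available either as a standard lemma cited from \cite{forster} or as a direct consequence of the Riemann–Roch machinery already at hand, so I would cite it rather than reprove it. One should also note the degenerate but legitimate case of constant functions: a nonzero constant $c$ has $\operatorname{div}(c) = 0$, and the inequality $0 + D \geq 0$ would force $\deg D \geq 0$, again contradicting $\deg D < 0$, so constants are excluded as well and the argument uniformly handles every nonzero section.
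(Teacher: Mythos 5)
Your proof is correct and follows essentially the same route as the paper: both arguments take a nonzero $f \in H^0\left(X, \mathcal{O}_D\right)$, use the divisor inequality $\operatorname{div}(f) + D \geq 0$, and derive a contradiction from the fact that a nonzero meromorphic function on a compact Riemann surface has a principal divisor of degree zero. The only cosmetic difference is bookkeeping: the paper writes $(f) \geq -D$ and concludes $\deg(f) \geq -\deg D > 0$, while you take degrees of the effective divisor $\operatorname{div}(f)+D$ directly; these are the same argument.
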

\begin{proof}
Suppose $f \in H^0\left(X, \mathcal{O}_D\right)$ with $f \neq 0$.  Then $(f) \geq -D$ and thus 
$$
deg (f) \geq - deg D >0
$$
This contradicts the fact that $X$ is compact and $deg (f) = 0$.
\end{proof}

We can use Serre-Duality theorem to obtain equality of dimensions: 
\begin{equation}\label{sddimensions}
\dim H^0\left(X, \Omega_{-D} \right) =  \dim H^1\left(X, \mathcal{O}_D\right)
\end{equation}
For $D=0$ we obtain
\begin{equation}\label{genus}
\dim H^0\left(X, \Omega \right) = g= \dim H^1\left(X, \mathcal{O}\right)
\end{equation}
Here $H^0\left(X, \Omega \right) = \Omega(X)$ which denotes the sheaf of holomorphic 1-forms on $X$.
The following equation  can also be obtained as an application of Seree Duality Theorem:
\begin{equation}
\dim H^1\left(X, \Omega \right) = \dim  H^0\left(X, \mathcal{O} \right)  =1
\end{equation}
\begin{remark}
This is a known result that there are no non-constant holomorphic maps on a compact Riemann surface.
\end{remark}
Suppose $X$ is a compact Riemann surface of genus $g$. The divisor of any non-vanishing meromorphic 1-from $\omega$ on $X$ satisfies $deg(\omega) = 2g -2.$ We denote it by  $K $.  Hence the degree of a canonical divisor $K = 2 g - 2$.

We use the following form of Riemann Roch theorem and prove theorem \ref{wgp} as an application. 
\begin{equation}\label{thm rr}
 \dim H^0\left(X, \mathcal{O}_D\right) -  \dim H^0\left(X, \Omega_{-D} \right) = 1 - g + deg D
\end{equation}
\begin{lemma} \label{lemmah10}
Suppose $X$ is a compact Riemann surface of genus $g$ and $D$ is a divisor on $X$.  Then 
\begin{equation}
H^0\left(X, \Omega_{-D}\right) = 0   \mbox{ whenever } deg D > 2g-2
\end{equation}
\end{lemma}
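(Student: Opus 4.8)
The plan is to reduce this to the already-proved Theorem \ref{wgt1} by identifying $\Omega_{-D}$ with a sheaf of meromorphic multiples of a suitable divisor, and then checking that the associated degree becomes negative. First I would unwind the definition: by analogy with the definition of $\mathcal{O}_D$, a nonzero global section of $\Omega_{-D}$ is a meromorphic $1$-form $\omega$ on $X$ satisfying $\text{ord}_x(\omega) - D(x) \geq 0$ for every $x$, that is $(\omega) \geq D$.

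The central observation is that the divisor of any nonvanishing meromorphic $1$-form is a canonical divisor $K$ with $\deg K = 2g-2$, as recorded just above. Fixing one such form $\omega_0$ with $(\omega_0) = K$, multiplication by $\omega_0$ furnishes an isomorphism $\mathcal{O}_{K-D} \cong \Omega_{-D}$: indeed, $f \in \mathcal{O}_{K-D}(U)$ means $(f) \geq D - K$, and then $(f\omega_0) = (f) + K \geq D$, so $f\omega_0$ is a section of $\Omega_{-D}$; the inverse sends $\omega$ to $\omega/\omega_0$. Passing to global sections gives $H^0\!\left(X,\Omega_{-D}\right) \cong H^0\!\left(X,\mathcal{O}_{K-D}\right)$.

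The final step is a degree count. Since $\deg(K-D) = (2g-2) - \deg D$, the hypothesis $\deg D > 2g-2$ makes $\deg(K-D) < 0$, so Theorem \ref{wgt1} applied to the divisor $K-D$ yields $H^0\!\left(X,\mathcal{O}_{K-D}\right) = 0$. Combined with the isomorphism above, this gives $H^0\!\left(X,\Omega_{-D}\right) = 0$, as claimed.

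Alternatively, and more directly in the spirit of the proof of Theorem \ref{wgt1}, one can argue by contradiction without invoking an auxiliary $\omega_0$: a nonzero $\omega \in H^0\!\left(X,\Omega_{-D}\right)$ satisfies $(\omega) \geq D$, hence $2g-2 = \deg(\omega) \geq \deg D > 2g-2$, which is absurd. Either way the argument is short; the only point requiring care, and the main (minor) obstacle, is keeping the sign conventions in the definition of $\Omega_{-D}$ consistent so that the inequality $(\omega) \geq D$ and the canonical degree $\deg(\omega) = 2g-2$ line up correctly. Once the conventions are fixed, the contradiction is immediate.
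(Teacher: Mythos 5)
Your main argument is exactly the paper's proof: fix a nonvanishing meromorphic $1$-form $\omega_0$ with canonical divisor $K$, use multiplication by $\omega_0$ to get the isomorphism $\Omega_{-D} \cong \mathcal{O}_{K-D}$, note $\deg(K-D) = (2g-2) - \deg D < 0$, and apply Theorem \ref{wgt1}; you even supply the details of the isomorphism that the paper only asserts. Your alternative direct argument (a nonzero section would force $2g-2 = \deg(\omega) \geq \deg D > 2g-2$) is also correct and is just Theorem \ref{wgt1}'s degree-counting idea applied to $1$-forms without the detour through $\mathcal{O}_{K-D}$.
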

\begin{proof}
Suppose $\omega$ is a non-vanishing meromorphic 1-form on $X$ and $K$ is its canonical divisor.  Then there is an isomorphism $\Omega_{-D} \cong \mathcal{O}_{K-D}.$  Thus 
$$
H^0\left(X, \Omega_{-D}\right) \cong H^0\left(X, \mathcal{O}_{K-D}\right) 
$$
If deg $D > 2g-2$, then deg $(K-D)<0$.\\  
Thus by Theorem (\ref{wgt1}) $H^0\left(X, \mathcal{O}_{K-D}\right) =0.$
\end{proof}
\section{Proof of Theorem \ref{wgp} (Weierstrass Gap Theorem)}
\begin{proof}
Suppose $P \in X$.  If $D$ is a zero divisor, then 
$ \dim H^1(X, \mathcal{O}) = g$  and $deg (D)=0$. 

By the Riemann Roch theorem $ \dim H^0(X, \mathcal{O}) = 1$.  Therefore, there are no non-constant holomorphic functions on $X$.\\
 Define the divisor $D_n$ such that 
$$
D_n(x) =
\begin{cases}
n  \mbox{ if } x = P \\
0  \mbox{ if } x \neq P
\end{cases}
$$
For $n=1$, we have
$Deg(D_1)=1$.  Once again by the Riemann -Roch theorem, 
$$
\dim H^0(X, \mathcal{O}_{D_1}) = 2-g + \dim H^0(X, \Omega_{-D_1}).
$$
If $\dim H^0(X, \Omega_{-D_1}) = g$, then $\dim H^0(X, \mathcal{O}_{D_1}) = 2$, hence there exists $f \in \mathcal{M}(X)$ which has a simple pole at $P$ and is holomorphic in $X\setminus \{P\}$. 
 
If $\dim H^0(X, \Omega_{-D_1}) = g-1$, then $\dim H^0(X, \mathcal{O}_{D_1}) = 
1$,  hence there is no meromorphic function which has a simple pole at $P$ and is  holomorphic in $X\setminus \{P\}$.  

Now we want to see the effect of changing the divisor from $ D_{n-1}$  to $D_n.$   By the Riemann - Roch Theorem

$$
\dim H^0  (X, \mathcal{O}_{D_{n-1}}) = n-g + \dim H^0(X, \Omega_{-D_{n-1}})
$$
and 
$$
\dim H^0(X, \mathcal{O}_{D_{n}}) = n+1-g + \dim H^0(X, \Omega_{-D_{n}}) 
$$
If $$\dim H^0(X, \Omega_{-D_{n-1}}) = \dim H^0(X, \Omega_{-D_{n}}) $$ then

 $$\dim H^0(X, \mathcal{O}_{D_{n-1}}) + 1= \dim H^0(X, \mathcal{O}_{D_{n}}) .$$
So there exists a meromorphic function $f \in \mathcal{M}(X)$ with a pole of order $n$ at $P$ and is holomorphic in $X\setminus \{P\}$.

\noindent
If $$\dim H^0(X, \Omega_{-D_{n-1}}) = \dim H^0(X, \Omega_{-D_{n}})+1 $$ then
$$\dim H^0(X, \mathcal{O}_{{D_{n}}}) = \dim H^0(X, \mathcal{O}_{D_{n-1}}) .$$ 
So there will not exist a function with a pole of order $n$ at $P$ and is holomorphic in  $X\setminus \{P\}$.

So if $\dim H^0(X, \Omega_{-D_{n}})$ remains the same as $n$ increases by $1$, then a new linearly independent function is added in going from  the sheaf $\mathcal{O}_{D_{n-1}}$ to $\mathcal{O}_{D_{n}}$.

From the Equation  (\ref{genus}),  we have $ \dim H^0(X, \Omega) = g $.  Suppose $\omega$ is  a non-vanishing meromorphic 1-form on a compact Riemann surface of genus $g$, and $K$ is its divisor, then deg $(\omega) = 2g-2$.

By Lemma (\ref{lemmah10})
 $$\dim H^0(X, \Omega_{-D_{2g-1}})= 0.$$ 
Therefore, the number of times $\dim H^0(X, \Omega_{-D_{n}})$ does not remain the same must be $g$ times and at each change it decreases by 1. \\
This completes the proof.
\end{proof}

\section {Analyzing  gaps and non-gaps}
  
For details on gap sequence see \cite{farkas}. 
\begin{example}
Suppose $X$ is a compact Riemann surface fo $g=3$. We see the possible gap sequences  are\\
 $\{1,3,5\}$, $\{1, 2,3\}$, $\{1,2,4\}$, $\{1,2,5\}$\\
and corresponding non-gap sequences  in $\{2, \dots , 2g\}$  are:\\ $\{2,4,6\}$, $\{4, 5,6\}$, $\{3,5,6\}$, $\{3,4,6\}$. \\
Note that $2g$ is always a non-gap. 
\end{example}
Suppose $P \in X$. 
If $f$ has a pole of order $s$ at $P$, and $g$ has a pole of order $t$ at $P$, then $fg$ has a pole of order $s+t$ at $P$. 
The following problem may be of some interest to see the number of possible gaps and non-gaps $\leq 2g$ on $X$. 
The reader must note that this problem is  nothing to do with the proof of the theorem.
\noindent
\begin{problem}
 Write the numbers  $2$ to $2g-1$ into two (disjoint) parts $G =\{n_1, n_2, \dots , n_{g-1}\}$ and $G'= \{m_1, m_2, \dots,
 m_{g-1} \}$ such that no  number in $G$ is   a sum of any combination of numbers in $G'$. How many pairs of such $G$ and $G'$ exist?

Clearly then $\{1\} \cup G$ gives possible gap sequence and $G' \cup \{2g\}$ gives corresponding  non-gap sequence in $\{2, \dots , 2g\}$ at a point $P$ on $X$.
\end{problem}

We define a Weierstrass point by using gap sequence as follows:

\begin{definition}
Suppose $ P \in X$ and
$$
0 < n_1 < n_2 < \dots n_g <2g $$  be the gap sequence at $P$. 
 In terms of the gap sequence we define the weight of the point $P$,  denoted by $\omega(P)$ and defined  by 
	$$
	\omega(P)=\sum_{i=1}^g (n_i - i)
	$$
\end{definition}
Note that $w(P) \geq 0$ for all $P \in X$. 
\begin{definition} \label{weierstrass point}
A point $ P\in X$ is called a Weierstrass point if $ \omega(P) >0$.    
\end{definition}
One can compute the number of Weierstrass points  counted according to their weights on a compact Riemann surface $X$ of genus $g$. It is equal to  $  g^3 - g = (g-1)g(g+1)$.

 It follows that there are no Weierstrass points on the surfaces of genus $g=0$ and genus $g=1$. Also from the theorem \ref{wgp}, it follows that there is no non-constant meromorphic function on torus ($g=1 $ surface) with a single simple(=order 1) pole.

The following theorem gives the bounds for Weierstrass points on a compact Riemann surface of genus $g\geq 2$.

\begin{theorem}
Suppose $X$ is a compact Riemann surface of genus $g \geq 2$.  let $W(X)$ denotes the number of Weierstrass points on $X$. Then 
\begin{equation}
2g+ 2 \leq W(X) \leq g^3 - g
\end{equation}
\end{theorem}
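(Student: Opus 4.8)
The plan is to derive both inequalities from the identity $\sum_{P \in X} \omega(P) = g^3 - g$ recorded above (the count of Weierstrass points according to their weights), combined with one sharp pointwise estimate on the weight $\omega(P)$ of a single point. The right-hand inequality is then almost immediate. By Definition \ref{weierstrass point}, $W(X)$ counts exactly those $P$ with $\omega(P) > 0$, and since $\omega(P) = \sum_{i=1}^g (n_i - i)$ is always a non-negative integer, each Weierstrass point contributes $\omega(P) \geq 1$ to the total. Hence
$$
W(X) \;=\; \#\{P \in X : \omega(P) > 0\} \;\leq\; \sum_{P \in X} \omega(P) \;=\; g^3 - g .
$$

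For the left-hand inequality the crucial step is the claim that $\omega(P) \leq \tfrac{1}{2}g(g-1)$ for every $P \in X$. I would prove this from the numerical-semigroup property of the non-gaps: if $s$ and $t$ are non-gaps then so is $s+t$, because the product of two meromorphic functions with poles of orders $s$ and $t$ at $P$ has a pole of order $s+t$ (as noted just before the Problem above). Fix the gap sequence $1 = n_1 < n_2 < \dots < n_g < 2g$ at $P$. For a fixed $k$, if $\alpha$ is a non-gap with $0 < \alpha < n_k$, then $n_k - \alpha$ must itself be a gap, for otherwise $\alpha$ and $n_k - \alpha$ would be two non-gaps summing to the gap $n_k$, contradicting the semigroup property. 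Thus $\alpha \mapsto n_k - \alpha$ is an injection from the non-gaps lying in $\{1, \dots, n_k - 1\}$ into the gaps lying in $\{1, \dots, n_k - 1\}$. The latter set is precisely $\{n_1, \dots, n_{k-1}\}$ and so has $k-1$ elements, while the former has $(n_k - 1) - (k-1) = n_k - k$ elements; injectivity forces $n_k - k \leq k - 1$, that is $n_k \leq 2k - 1$. Summing over $k$ yields
$$
\omega(P) \;=\; \sum_{k=1}^g (n_k - k) \;\leq\; \sum_{k=1}^g (k-1) \;=\; \frac{g(g-1)}{2}.
$$

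Finally I would combine this pointwise bound with the mass identity. Since only Weierstrass points contribute to the sum,
$$
g^3 - g \;=\; \sum_{P \in X} \omega(P) \;\leq\; \frac{g(g-1)}{2}\, W(X),
$$
and solving gives $W(X) \geq (g^3 - g)\big/\big(\tfrac{1}{2}g(g-1)\big) = 2(g+1) = 2g+2$. The main obstacle is the pointwise weight bound $\omega(P) \leq \tfrac{1}{2}g(g-1)$, whose proof rests entirely on the semigroup structure of the non-gaps; the extremal gap sequence $1,3,5,\dots,2g-1$ (the hyperelliptic case) shows that this bound, and hence the lower bound $2g+2$, is sharp. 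Everything else is bookkeeping around the total-weight identity $g^3 - g$, which is the genuinely deep input and the place where the Riemann-Roch machinery and the hypothesis $g \geq 2$ really enter.
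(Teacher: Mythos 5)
Your proof is correct, and it actually does more than the paper does: the paper states this theorem with no proof at all, merely recording the total-weight identity $\sum_{P\in X}\omega(P)=g^3-g$ as a known fact and computing the weight $\tfrac{1}{2}g(g-1)$ only for a Weierstrass point of a hyperelliptic surface (from which it observes that hyperelliptic surfaces have exactly $2g+2$ such points). Your argument assembles these ingredients into a genuine proof of both inequalities. The upper bound is immediate, as you say, since each Weierstrass point contributes an integer weight $\geq 1$ to the total. The substantive step you add, absent from the paper, is the pointwise bound $\omega(P)\leq\tfrac{1}{2}g(g-1)$ for an \emph{arbitrary} point $P$, and your derivation is sound: if $\alpha$ and $n_k-\alpha$ were both non-gaps, the semigroup property of non-gaps (noted in the paper just before its Problem) would make the gap $n_k$ a non-gap, so $\alpha\mapsto n_k-\alpha$ injects the $n_k-k$ non-gaps below $n_k$ into the $k-1$ gaps below $n_k$, forcing $n_k\leq 2k-1$ and hence $\omega(P)=\sum_{k=1}^g(n_k-k)\leq\tfrac{1}{2}g(g-1)$; this is the classical lemma one finds in Farkas--Kra, and the hyperelliptic sequence shows it is sharp. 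Dividing the total weight by this maximal weight then yields $W(X)\geq 2g+2$, the division being legitimate precisely because $g\geq 2$. The one caveat to record: both you and the paper take the total-weight identity $g^3-g$ on faith (its proof requires the Wronskian of a basis of holomorphic $1$-forms together with Riemann--Roch, and that is where the genuinely hard work lives), so your proof is complete only relative to that quoted fact --- but since the paper itself quotes it without proof, this is a fair starting point, and relative to it your argument is complete and correct.
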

A point $p$ is called a hyperelliptic Weierstrass point if the non-gap sequence starts with 2 and the hyperelliptic Riemann surfaces are characterized by the gap sequence at the Weierstrass points: 
\begin{equation} \label{he}
P= \{1,3, \dots , 2g-1\} 
\end{equation}
hence the non-gaps are
  Q=\{2,4, \dots, 2g\}
	
Let $X$ be  a hyperelliptic Riemann surface and $p$ be a Weierstrass point on $X$. Then we can find 
\begin{equation}\label{he}
\omega(p) = [1 + 3 + \dots + (2g-1)] -  [ 1 + 2 + \dots + g] =  \frac {g (g-1)}{2}
\end{equation}
Therefore, from the Equation \ref{he} we can see that there are precisely $2g+2$ Weierstrass points on a hyperelliptic Riemann surface. 

\begin{remark}
In terms of Weierstrass points, the hyperelliptic Riemann surfaces may be characterized as the surfaces that attain the lower bound on the number of Weierstrass points.
\end{remark}

Jenkins proved that if $h$ is a first non-gap at $P$ and $(h,k)=1$, then  $k$ is a gap if $$g>\frac{(h-1)(k-1)}{2}.$$  Further he observes, hyperelliptic Riemann surfaces are  a special case whose first non-gap  sequence starts with a prime number.  See \cite{jen}. 

For the Exceptional Riemann surfaces the gap sequence $G$ and the non-gap sequence $G'$ at each Weierstrass point is given by
\begin{equation} \label{er}
G = \{1,2,3, \dots , g-1, g+1\}  \mbox { and }
G' = \{ g, g+2, \dots , 2g-1\}.
\end{equation}

\section{Concluding Remarks } \label{WP}
Some times Theorem  \ref{wgp} will give information on the order of a pole of a meromorphic function at a point $P$ and holomorphic in $X \setminus \{P\}$. The following example illustrates this:
\begin{example}
Suppose $X$ is a  hyperelliptic Riemann surface of genus 6 and $P$ is a Weierstrass point.  Then there exist meromorphic functions with order 2, 4 and 6  at $P$ which are non-gaps of hyperelliptic gap sequence.  But at the same time if $P$ is not a Weierstrass point, there is no meromorphic function with a  pole of order $<7$ at $P$, and holomorphic in $X \setminus \{P\}$.   
\end{example}

There is an another important application of Weierstrass gap sequence to prove Automorphism group of a compact Riemann surface of genus $g \geq 2$ is finite.  This is a well known result proved by Schwarz \cite{farkas}. 
The proof of finiteness of the $Aut(X)$ follows from the observation that if $\sigma $ is an automorphism of a compact Riemann surface of genus $g \geq 2$, then $\sigma ((W(X)) = W(X)$ where $W(X)$ denotes the set of Weierstrass points of $X$. This follows again because of the gap sequence at a point $P$ and at $\sigma(P)$ are the same.

\subsection*{Acknowledgments}
This work was supported by The National Board for Higher Mathematics (NBHM) Govt. of India. Grant No.  02011/6/2023 NBHM(R.P)/4301. 
\subsection*{Financial Interests}The author has no relevant financial or non-financial interests to disclose.

\end{document}